\documentclass[12pt]{article}
\usepackage{latexsym}
\usepackage{theorem}
\usepackage{graphicx}
\usepackage{amsmath}
\usepackage{color}
\usepackage{xcolor}
\usepackage{amsfonts}
\usepackage{natbib}
\usepackage{soul}
\usepackage{hyperref}
\usepackage{enumerate}

\headsep 0pt
\headheight 0pt
\topmargin 0pt
\oddsidemargin 0pt
\evensidemargin 0pt
\textwidth 6.5in 
\textheight 8.75in

\theorembodyfont{\rmfamily}
\newtheorem{theorem}{Theorem}

\newtheorem{lemma}[theorem]{Lemma}

\newtheorem{definition}[theorem]{Definition}

\theoremstyle{break}

\newenvironment{proof}{\paragraph{Proof:}}{\hfill$\square$}



\title{A Review of Minimum Cost \\ Box Searching Games}

\author{Thomas Lidbetter\thanks{Department of Management Science and Information Systems, Rutgers Business School, Newark, NJ 07102, tlidbetter@business.rutgers.edu} }

\date{}

\begin{document}

\maketitle

\begin{abstract}
\noindent We consider a class of zero-sum search games in which a Hider hides one or more targets among a set of $n$ boxes. The boxes may require differing amounts of time to search, and detection may be imperfect, so that there is a certain probability that a target may not be found when a box is searched, even when it is there. A Searcher must choose how to search the boxes sequentially, and wishes to minimize the expected time to find the target(s), whereas the Hider wishes to maximize this payoff. We review some known solutions to different cases of this game.
\end{abstract}



\section{Introduction}

The origin of the field of {\em search games} is often traced back to \cite{isaacs}, where the author introduced a so-called {\em simple search game}, played in an arbitrary subset $\mathcal{R}$ of Euclidean space.  The formulation is very general, and involves a Hider choosing a hiding point in $\mathcal{R}$, and the Searcher choosing some kind of trajectory, which explores the whole of $\mathcal{R}$. The payoff of the game is the time taken for the Hider to be discovered, and the game is zero-sum, with the Hider being the maximizer and the Searcher being the minimizer. 

Isaac's game later inspired \cite{gal1979search} to formally introduce search games on networks, played either with an immobile Hider, as in \cite{isaacs}, or with a mobile Hider. This led to a rich theory of search games on networks, which can be found in \cite{alpern2003theory} and in the more recent reworking of \cite{alpern2011new}.

But search games can be traced back a little further. Indeed, prior to the work of \cite{isaacs}, a discrete search game was introduced by \cite{bram19632}. In this game, a Hider chooses one of $n$ boxes in which to hide a target, and a Searcher opens the boxes one-by-one until finding the target, with the aim of minimizing the total expected number of boxes to be opened. In this article, we will present this game formally, and review what is known about it, including special cases and generalizations, including a variation where each box takes a given amount of time to search.

\section{Game Definition and Preliminaries}

We consider a zero-sum game played between a Hider (the maximizer) and a Searcher (the minimizer). There is a set of $n$ boxes, which we denote $\mathcal{B} \equiv  \{1,2,\ldots,n\}$. Each box $j$ has a {\em detection probability} $q_j \in (0,1]$, which is the probability, conditional on the target lying in box $j$, that it is detected whenever that box is searched. The outcome of each search is independent of all previous search outcomes. The Hider chooses a box to hide the target in, so the Hider's pure strategy set is $\mathcal{B}$. Thus, his mixed strategy set is
\[
\Delta(\mathcal B) \equiv \Bigl\{ (p_1,\ldots,p_n): p_j \ge 0 \text{ for } j \in \mathcal{B} \text{ and } \sum_{j=1}^n p_j =1 \Bigl\}.
\] 
A pure strategy for the Searcher is an infinite sequence $\xi$ of boxes, so her pure strategy set may be denoted $\mathcal{B}^\infty$. For mixed Searcher strategies, we specify that the support is countable, in line with \cite{clarkson2023classical}, so a mixed strategy for the Searcher is specified by a function $\theta: \mathcal{B}^\infty \rightarrow [0,1]$, where $\{\xi \in \mathcal{B}^\infty: \theta(\xi) >0\}$ is countable, and $\sum_{\xi} \theta(\xi) =1$. The countability of $\{\xi \in \mathcal{B}^\infty: \theta(\xi) >0\}$ ensures that this sum is well-defined. 

We add one more set of parameters to the game, introduced later by \cite{clarkson2023classical}: each box $j \in \mathcal{B}$ has a {\em search time} $t_j >0$, which is the time it takes to search box $j$.

The payoff of the game is the total expected time to find the target. For a Hider strategy~$j$ and a Searcher strategy $\xi$, the payoff, which we refer to as the {\em search time}, is denoted $u(j, \xi)$. We extend the definition of $u$ to mixed strategies, so that for a Hider mixed strategy $\mathbf{p}$ and a Searcher mixed strategy $\theta$, the expected payoff (or {\em expected search time}) is given by
\[
u(\mathbf{p}, \theta) \equiv \sum_{j=1}^n \sum_{\theta \in \mathcal{B}^\infty} p_j \theta(\xi) u(j, \xi).
\]
Since the Searcher's pure strategy set has uncountably infinite cardinality, it is not immediately obvious that a minimax theorem holds and that the game has a value or optimal strategies. However, since the Hider's pure strategy set is finite and the payoff function is bounded below by $0$, it follows from standard results on semifinite games -- see, for example, \cite[Chapter 13]{ferguson2020course} -- that the game has a value $V$ given by
\[
V = \max_{\mathbf p \in \Delta^n} \inf_{\xi \in \mathcal{B}^\infty} u(\mathbf p, \xi) = \inf_{\theta} \max_{j \in \mathcal{B}} u(j, \theta).
\]
It also holds that the Hider has an optimal (max-min) strategy and the Searcher has an $\varepsilon$-optimal strategy: that is, for any $\varepsilon >0$, she has a strategy $\theta$ such that $u(j,\theta) \le V + \varepsilon$ for all $j \in \mathcal{B}$. In fact, it is possible to show that the Searcher has an optimal strategy that mixes between at most $n$ pure strategies -- see \cite{clarkson2023classical} -- but the proof of this is beyond the scope of this article.

It is important to understand how to calculate a best response for the Searcher against a fixed Hider mixed strategy $\mathbf{p}$: that is, a strategy $\xi$ that minimizes $u(\mathbf{p}, \xi)$. This is an interesting problem in itself, and was first solved by Blackwell \cite[reported in][]{matula1964periodic}, and also independently by \cite{black1965discrete}. We present the latter solution, which is derived using a particularly elegant graphical method. 

We fix the Hider strategy $\mathbf{p}$, and for a Searcher strategy $\xi$, we write $T_k(\xi)$ for the total time taken for the first $k$ looks according to $\xi$, and we write $P_k(\xi)$ for the probability that the target is found on or before the $k$th look. Then the expected search time can be written
\[
u(\mathbf{p}, \xi) = \sum_{k=1}^\infty (T_k(\xi) - T_{k-1}(\xi))(1 - P_{k-1}(\xi)),
\]
as depicted in Figure~1. The shaded area is the expected search time.

\begin{figure}[h]
\center
\includegraphics[width=10cm]{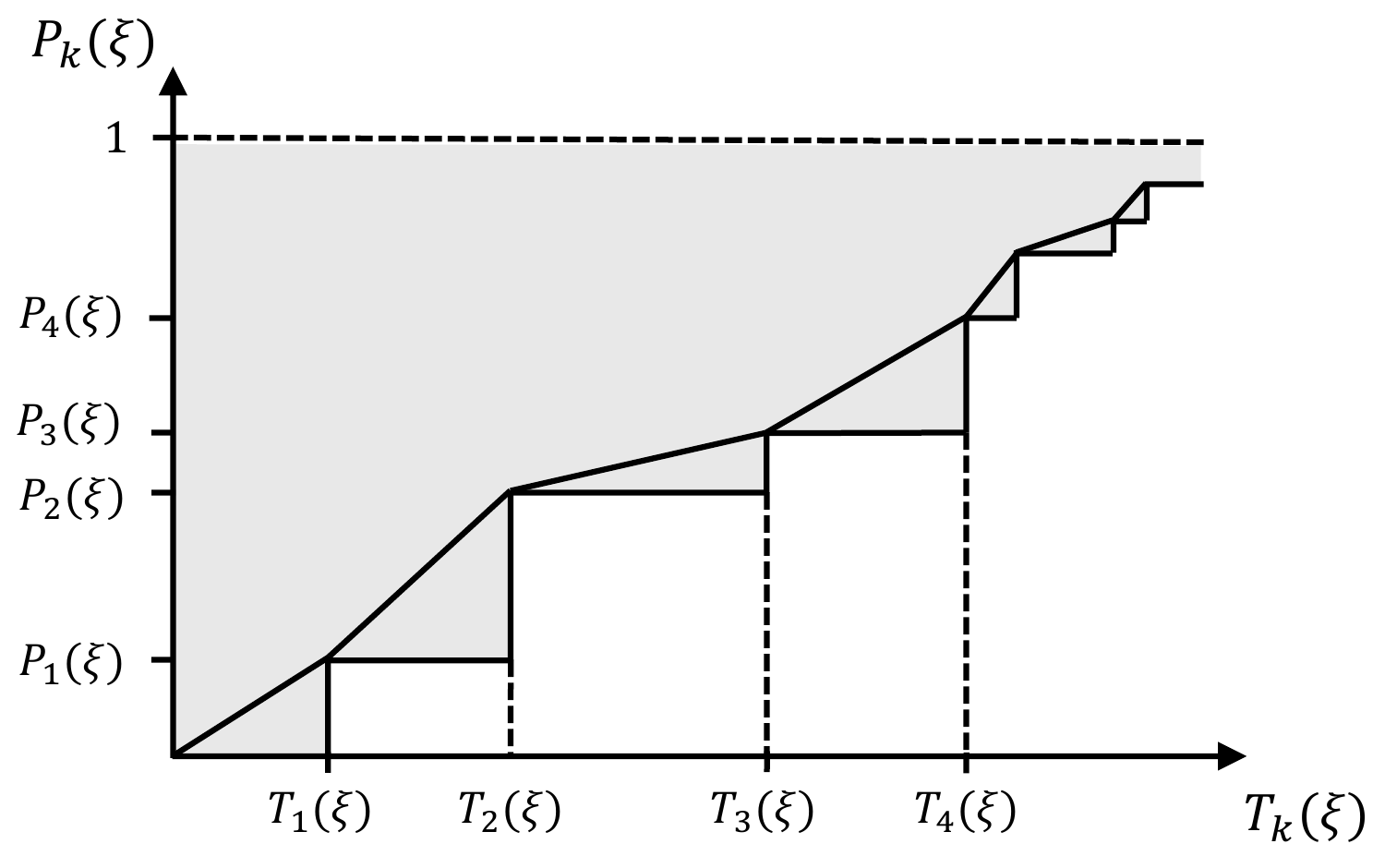} 
\caption{Graphical representation of the expected search time of a search $\xi$ against a fixed Hider strategy $\mathbf p$.}
\label{fig:plot}
\end{figure}

We can think of the shaded area in Figure~\ref{fig:plot} as the union of rectangles placed side by side, each corresponding to a look in a particular box. Each rectangle is composed of a triangle and a trapezium, as depicted in the figure. Consider the rectangle corresponding to the $m_i$th look in box $i$. The base of the corresponding triangle has length $t_i$, and the height of the triangle is equal to the probability the target is found on that look, which is $p_i (1-q_i)^{m_i-1} q_i$. Since all searches $\xi$ with finite expected cost must search each box an infinite number of times, all such searches must contain the same set of triangles in the plot of Figure~1, so the expected search time is only determined by the area of the trapezia. 

Clearly, this area is minimized if the triangles are ordered in non-increasing order of their slopes 
\[
\psi_{j,m_j} \equiv \frac{p_j (1-q_j)^{m_j-1} q_j }{ t_j}.
\] 
It is important to check that the search that arranges the triangles in this way is feasible: this follows from the fact that $\psi_{j,m_j}$ is decreasing in $m_j$. Note that if all the detection probabilities $q_j$ are equal to $1$ (so that each box only needs to be checked once), then this optimal policy boils down to ordering the boxes in non-increasing order of $p_j/t_j$. This optimal search policy for the finite problem had previously been found by \cite{bellman} (Chapter III, Exercise 3, p.90), and is equivalent to {\em Smith's Rule} for minimizing the sum of weighted completion times on one machine \citep{smith1956various}.

This policy for an optimal search has a nice  interpretation. Indeed, suppose the next search of box $j$ will be the $m_j$th search of that box, for each $j \in \mathcal{B}$. Then, the conditional probability $p_j'$ that the target is in box $j$, given that it has not yet been found, is given by
\begin{align}
p_j' = \frac{p_j(1-q_j)^{m_j-1}}{ \sum_{i=1}^n p_i (1-q_i)^{m_i-1}}, \label{eq:p}
\end{align}
for each $j \in \mathcal{B}$. Hence, the index $\psi_{j, m_j}$ can be written $\psi_{j,m_j} = \lambda p_j' q_j/t_j$, where $\lambda$ is a constant that depends on $m_1,\ldots,m_n$, and is independent of $j$. The index $p_j' q_j/t_j$ is the ratio of  the conditional probability of finding the target in box $j$ on the next look, given that it has not yet been found, to the time taken to search that box. So an optimal search has the property that,  any point in the search, the next box to be searched should be one that maximizes this ratio.

Observe that there may be many optimal strategies if at some point in the search, the index $\psi_{j,m_j}$ is not maximized by a unique $j$. For example, consider the case $n=2$, $t_1=t_2=1$, $p_1=p_2=1/2$ and $q_1=q_2=1/2$. Clearly, a search strategy $\xi=(a_1,a_2,\ldots)$ is optimal if and only if it has the property that $a_{2j-1} \neq a_{2j}$ for each $j=1,2,\ldots$. Indeed, the Searcher is indifferent between opening boxes $1$ and $2$ at the beginning of the search, then after Bayesian updating the new hiding probabilities are $1/3$ and $2/3$ in some order, so the Searcher opens a different box next. Updating the probabilities again, the distribution returns to $(1/2,1/2)$.

A natural Hider strategy to consider in the game is the one that makes all the indices $\psi_{j,1}$ equal, so that at the beginning of the search, the Searcher is indifferent between looking in all the boxes. We call this strategy Hider's {\em equalizing strategy}, defined formally below.
\begin{definition}
The Hider's equalizing strategy $\mathbf{p}^*$ is given by
\begin{align}
p^*_j = \frac{t_j/q_j}{\sum_{i=1}^n t_i/q_i}, 
\label{eq:equalizing}
\end{align}
for each $j \in \mathcal{B}$.
\end{definition}
One might conjecture that the Hider's equalizing strategy is always optimal. It turns out that this is true in some special cases.

\section{Equal Search Times and Detection Probabilities}

As a warm-up, we consider our game in the case that all the search times, $t_j$ are equal to $1$ and all the detection probabilities are equal to some $q \in (0,1]$. We will denote the value of the game by $V_q$ in this section.

If $q$ is equal to $1$, then the game is finite, and by symmetry it is clear that it is optimal for the Hider to choose each of his $n$ strategies with probability $1/n$ (which is the equalizing strategy, $\mathbf p^*$), and for the Searcher to choose each of her $n!$ strategies with probability $1/n!$. We can compute the value $V_1$ of the game explicitly by calculating the expected search time of an arbitrary ordering of the boxes against $\mathbf p^*$:
\[
V_1= \sum_{j=1}^n \frac{1}{n} \cdot j = \frac{n+1}{2}.
\]
In fact, there is a neater optimal strategy for the Searcher, which may be thought of as follows. The boxes are arranged on a circle, clockwise from $1$ to $n$. For each $i \in \mathcal B$, Let $s_i$ be the ordering of the $n$ boxes that starts with box $i$ and proceeds clockwise through the remaining boxes, so that $s_i=(i,i+1,\ldots,n,1,2,\ldots,i-1)$. Now consider the Searcher strategy $\xi^*$ that chooses each of the $n$ strategies $s_i$ with equal probability. The expected search time of this strategy against an arbitrary Hider strategy $j$ is
\[
u(j, \xi^*) =  \sum_{i=1}^j \frac{1}{n} \cdot (j-i+1) + \sum_{i=j+1}^n \frac{1}{n} \cdot (n-i+j+1) = \frac{n+1}{2}.
\]
Now suppose the detection probabilities are all equal to some $q \in (0,1)$. In this case, it is again clear that, by symmetry, $\mathbf p^*$ is optimal. The value can be calculated by considering an optimal response to $\mathbf p^*$. Such a best response must start by searching all the boxes in an arbitrary order. Subsequently, the conditional hiding distribution reverts back to the initial distribution $\mathbf p^*$, so a best response continues repeatedly searching all $n$ boxes in any order.  Let us call each of these $n$ searches of the boxes ``rounds'', and note that the probability that the target is found in round $k$ is $q(1-q)^{k-1}$ for $k=1,2,\ldots$. Given it is found in round $k$, the expected search time is the sum of  $(k-1)n$ (the time spent searching in the first $k-1$ rounds) and $V_1=(n+1)/2$ (the expected time spent searching in round $k$). Hence, the value $V_q$ of the game is given by
\begin{align*}
V_q &= \sum_{k=1}^\infty q(1-q)^{k-1} \left((k-1)n + \frac{n+1}{2} \right) \\
&= \frac{n}{q} - \frac{n-1}{2}. 
\end{align*}
An optimal Searcher strategy can be obtained by expanding the definition of the optimal Searcher strategy $\xi^*$ for the finite case. Whereas in the case of perfect detection $\xi^*$ chooses some $s_i$ at random, for imperfect detection $s_i$ is chosen at random and repeated indefinitely. Then, the probability the target is found given the target is found in round $k$ is again $q(1-q)^{k-1}$, and the conditional expected search time is $(k-1)n+V_1$, given that the target is found in round $k$. Hence, the same calculation as above shows that this Searcher strategy ensures the expected search time is precisely $V_q$ against any Hider strategy.

The solution of this special case can be attributed to \cite{ruckle1991discrete}.

An alternative optimal Searcher strategy makes the random choice of $s_i$ at the beginning of each round of search.

\section{Equal Detection Probabilities}

We now relax the assumption of equal search times, but still assume that the detection probabilities are all equal to some $q \in (0,1]$. In this section, we will denote the value of the game by $v_q$. 

As in the previous section, we will start with the finite case $q=1$. The solution of this case was first discovered by \cite{condon2009algorithms}. Alternative solutions can be found as a special cases of solutions of more general games in \cite{lidbetter2013search} and \cite{alpern2013mining}. 

We first calculate the lower bound on the value given by the Hider's equalizing strategy $\mathbf p^*$, given by $p^*_j = t_j/t(\mathcal{B})$, where $ t(\mathcal{B}) = \sum_{j \in \mathcal{B}} t_j$. Suppose the Searcher looks in the boxes in the order $1,2,\ldots,n$. Then the expected search time is
\[
\sum_{j=1}^n p^*_j \sum_{i=1}^j t_i = \frac{\sum_{j=1}^n  \sum_{i=1}^j t_i t_j }{t(\mathcal{B})} = \frac{t^2(\mathcal{B}) + t(\mathcal{B})^2}{2 t(\mathcal{B})},
\]
where $t^2(\mathcal{B}) = \sum_{j \in \mathcal{B}} t_j^2$.

We now show that the Searcher can match this bound on the expected search time by using a slightly more sophisticated version of the Searcher strategy of the previous section in the case of equal search times. This time, the strategy $\xi^*$ chooses $s_j$ with probability  $t_j/t(\mathcal{B})$. In this case, the expected search time to find a target located in  box $n$ is
\[
u(\xi^*, n) = \sum_{j=1}^n \frac{t_j}{t(\mathcal{B})} \sum_{i=j}^n t_i =  \frac{t^2(\mathcal{B}) + t(\mathcal{B})^2}{2 t(\mathcal{B})}.
\]
By a relabeling argument, the expected search time is the same whichever box the target lies in. Therefore, 
\[
v_1 =  \frac{t^2(\mathcal{B}) + t(\mathcal{B})^2}{2 t(\mathcal{B})}.
\]
Now suppose $q<1$. We will show that the Hider's equalizing strategy is still optimal.

At the start of the search, all the indices $\psi_{j,1}$ are equal, so a best response to $\mathbf p^*$ starts by opening all the boxes in some order. Subsequently, for each $j \in \mathcal{B}$, the conditional probability $p_j'$ of the target lying in box $j$ is, by~(\ref{eq:p}),
\[
p_j' = \frac{p^*_j(1-q)}{ \sum_{i=1}^n p^*_i (1-q)}= p^*_j.
\]
In other words, the hiding distribution reverts back to the Hider's equalizing strategy. Therefore, an optimal response to $\mathbf p^*$ repeatedly opens all the boxes in some order. By a similar calculation as for the case of equal costs, the expected search time of such a search is
\[
\sum_{k=1}^\infty q(1-q)^{k-1} \left((k-1)t(\mathcal{B}) + v_1 \right) = \frac{(1-q)t(\mathcal{B})}{q} +  \frac{t^2(\mathcal{B}) + t(\mathcal{B})^2}{2 t(\mathcal{B})}.
\]
An optimal Searcher strategy may also be obtained by choosing with probability $t_j/t(\mathcal{B})$ to repeatedly cycle through $s_j$. By an identical calculation to the one above we obtain an upper bound on the value that matches the lower bound. Hence,
\[
v_q = \frac{(1-q)t(\mathcal{B})}{q} +  \frac{t^2(\mathcal{B}) + t(\mathcal{B})^2}{2 t(\mathcal{B})}.
\]

\section{Non-Equal Detection Probabilities} \label{sec:non-equal}
Once we relax the assumption of non-equal detection probabilities, the game becomes a lot harder to solve. \cite{clarkson2024computing} recently developed an algorithm that converges to optimal strategies for both players, but finding some general expression for optimal strategies in closed form is an unrealistic goal. A more realistic goal is to find an algorithm whose output is optimal strategies in closed form, and in this section we give an idea of how such an algorithm may be constructed for the case $n=2$, by considering a concrete example.

One important observation is that if there is any hope of obtaining a closed form solution to an instance of the game, we must assume that there are some positive coprime integers $s_1,\ldots, s_n$ and some $q>0$ such that $(1-q_j)^{s_j} = q$ for all $j \in \mathcal{B}$. This assumption implies that for any fixed hiding distribution $\mathbf p$, after each location $j$ has been searched $s_j$ times, the hiding distribution returns to its initial state after Bayesian updating. In this case, there exist best responses to $\mathbf p$ that are cyclical, in the sense that after some finite time, they repeat. Without this assumption on the $q_j$, best responses cannot be cyclical, and therefore cannot be written in closed form.

We note that this assumption is not very restrictive, because for any $q_1,\ldots,q_n$, we can find some $s_1,\ldots,s_n$ such that $(1-q_j)^{s_j}, j=1,\ldots n$, are arbitrarily close to each other. However, depending on the desired closeness of approximation, $s_1,\ldots,s_n$ may have to be very large.

We note the following useful facts about optimal strategies, which were proved in \cite{clarkson2023classical}. For ease of exposition, we assume that the Searcher always has optimal strategies (which was also proved in \cite{clarkson2023classical}).

\begin{lemma} \label{lemma} Let $\mathbf p$ and $\theta$ be optimal strategies for the Hider and Searcher, respectively. 
\begin{enumerate}[(i)]
\item Every pure strategy chosen by $\theta$ with positive probability must be a best response to~$\mathbf p$.
\item The strategy $\mathbf p$ has full support: that is, $p_j >0$ for all $j \in \mathcal{B}$.
\item We have $u(j, \theta)=V$  for every $j \in \mathcal{B}$, where $V$ is the value of the game.
\end{enumerate}
\end{lemma}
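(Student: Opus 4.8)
The plan is to prove the three parts in the order (i), (ii), (iii), since part (ii) will use part (i) and part (iii) will use part (ii). Throughout I will use the minimax identity $V = \max_{\mathbf p} \inf_{\xi} u(\mathbf p, \xi) = \inf_{\theta} \max_{j} u(j,\theta)$ recorded earlier, together with two elementary facts: that $V < \infty$ (witnessed, for instance, by the Searcher strategy that repeatedly cycles through all $n$ boxes, which has finite expected search time against any Hider strategy), and that, since all payoffs are non-negative, $u(\mathbf p, \theta) = \sum_{\xi} \theta(\xi)\, u(\mathbf p, \xi) = \sum_{j} p_j\, u(j,\theta)$. For part (i), optimality of $\mathbf p$ gives $\inf_{\xi} u(\mathbf p, \xi) = V$, hence $u(\mathbf p, \xi) \ge V$ for every $\xi \in \mathcal B^\infty$, while optimality of $\theta$ gives $\max_j u(j,\theta) = V$, hence $u(\mathbf p, \theta) = \sum_j p_j u(j,\theta) \le V$. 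If some $\xi_0$ with $\theta(\xi_0) > 0$ were not a best response to $\mathbf p$, i.e.\ $u(\mathbf p, \xi_0) > V$, then bounding every other term below by $V$ would give $u(\mathbf p, \theta) \ge \theta(\xi_0)\, u(\mathbf p, \xi_0) + (1 - \theta(\xi_0))\, V > V$, a contradiction; so every pure strategy in the support of $\theta$ is a best response to $\mathbf p$.

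For part (ii), suppose $p_{j_0} = 0$ for some box $j_0$. The crucial claim is that then no best response to $\mathbf p$ ever searches box $j_0$: given a search $\xi$ that looks into $j_0$ at least once, deleting every such look produces a search $\xi'$ in which the looks into the remaining boxes occur no later than in $\xi$, and strictly earlier after the position of the first deleted look; since $p_{j_0} = 0$, these deletions leave the detection probability at every look unchanged, and, as deleting looks cannot increase the cost, $\xi'$ still has finite expected search time. Hence $u(\mathbf p, \xi') < u(\mathbf p, \xi)$, so $\xi$ is not a best response. Combined with part (i), every pure strategy in the support of $\theta$ avoids box $j_0$, so a target hidden there is never found and $u(j_0, \theta) = \infty$; but $u(j_0, \theta) \le \max_j u(j,\theta) = V < \infty$, a contradiction. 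Therefore $p_j > 0$ for all $j \in \mathcal B$.

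For part (iii), combining $u(\mathbf p, \theta) = \sum_\xi \theta(\xi)\, u(\mathbf p, \xi) \ge \inf_\xi u(\mathbf p,\xi) = V$ with the bound $u(\mathbf p, \theta) \le V$ established in part (i) yields $u(\mathbf p, \theta) = V$. Writing $u(\mathbf p, \theta) = \sum_j p_j\, u(j,\theta)$ and using $u(j,\theta) \le V$ for every $j$ gives $\sum_j p_j\,(V - u(j,\theta)) = 0$, a sum of non-negative terms, so by the full-support property from part (ii) we must have $u(j,\theta) = V$ for every $j \in \mathcal B$. The one step that calls for genuine care is the deletion argument in part (ii) --- making precise that excising every look into a zero-probability box strictly decreases the expected search time while keeping it finite; the remaining steps are routine rearrangements of the minimax identities.
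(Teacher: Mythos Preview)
Your proof is correct and follows essentially the same route as the paper's: part~(i) is the standard averaging argument from the sandwich $V \ge u(\mathbf p,\theta) = \sum_\xi \theta(\xi)\,u(\mathbf p,\xi) \ge V$; part~(ii) is the same deletion argument (removing all looks in a zero-probability box strictly lowers the expected cost) combined with part~(i); and part~(iii) is the complementary-slackness step, which the paper states in one line as ``each $j$ must be a best response to $\theta$'' and which you spell out explicitly via $\sum_j p_j\bigl(V - u(j,\theta)\bigr) = 0$. The only difference is level of detail, not of idea.
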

\begin{proof} For part (i), first note that by the optimality of $\theta$, we have $u(\mathbf p, \theta) \le V$. Also, for any pure strategy $\xi \in Y$, we have $u(\mathbf p, \xi) \ge V$, by the optimality of $\mathbf p$. It follows that
\[
V \ge u(\mathbf p, \theta)= \sum_{\xi} \theta(\xi) u(\mathbf p, \xi) \ge V.
\]
Hence, both inequalities above hold with equality, and $u(\mathbf p, \xi) = V$ for each $\xi$ chosen by $\theta$ with positive probability. In other words, $\xi$ is a best response to $\mathbf p$. 

Turning to part (ii), suppose that $\mathbf p$ is an optimal Hider strategy that chooses some $j \in \mathcal B$ with probability $0$. Then any pure best response to $\mathbf p$ never visits location $j$. This is because if some $\xi$ did ever visit $j$, then by omitting all searches of $j$, we could obtain a sequence with a strictly smaller expected search time against $\mathbf p$. In this case, by part (i), any optimal mixed Searcher strategy $\theta$  must only choose with positive probability pure strategies that never visit $j$. But in that case, a best response to $\theta$ is to hide at location $j$, since that has infinite expected search time against $\theta$. This contradicts the optimality of $\theta$.

Part (iii) follows from the fact that $\mathbf p$ has full support, so each $j \in \mathcal{B}$ must be a best response to $\theta$.
\end{proof}

It is worth remarking that Lemma~\ref{lemma} actually holds for an arbitrary zero-sum semi-infinite game that has a value $V$ and optimal strategies. 

The only general class of instances of the game with non-equal detection probabilities for which the solution can be found is due to \cite{roberts1978search}. They showed that for $n=2$, if $(1-q_1)^s = (1-q_2)^{s+1}$ for some non-negative integer $s$, then the Hider's equalizing strategy is optimal if and only if $s \le 12$. The proof of this is not terribly instructive, so we omit it here.

In the remainder of this section we focus on the special case of $n=2$, and give an example to indicate how an optimal solution may be found in this case. 

\subsection{Example} \label{sec:ex}

In our example we take $q_1=1/2$ and $q_2=15/16$, so that 
\[
(1-q_1)^4 = (1-q_2) = \frac{1}{16}.
\]
We take the search times $t_1$ and $t_2$ to be $1$, although it turns out that lifting this restriction does not make matters any more complicated.

We first check whether the Hider's equalizing strategy $\mathbf p^*$ is optimal. Note that without explicitly writing down $\mathbf p^*$, we can write down a best response to the equalizing strategy. Either box can be chosen to be opened first. If box $2$ is opened first, then after Bayesian updating, the new hiding probabilities of box $1$ and $2$ are proportional to $p_1^*$ and $(1-p_2)p_2^*$, respectively, so that box $1$ becomes more attractive to the Searcher. Box $1$ must then be opened four times before the updated hiding probabilities are in proportion to $(1-q_1)^4p_1^*$ and $(1-q_2) p_2^*$, respectively. Since $(1-q_1)^4 = (1-q_2)$, this means the hiding distribution has returned  to $\mathbf p^*$. If, instead, box $1$ is opened first, then box $2$ must be opened next, followed by box $1$ three times, then, again, the distribution returns to $\mathbf p^*$. 

Write $S_1$ for the finite sequence $1,2,1,1,1$ and $S_2$ for the sequence $2,1,1,1,1$. Then the general form of a best response to $\mathbf p^*$ proceeds in blocks of $5$ searches, where each block is equal to either $S_1$ or $S_2$. Clearly, the best response that minimizes the expected search time given that the target is in box $1$ is $\xi^1 \equiv S_1, S_1, S_1,\ldots$, and the one that minimizes the expected search time given the target is in box $2$ is $\xi^2 \equiv S_2, S_2, S_2,\ldots$. 

Let us write $T_i(\xi)$ for the expected search time $u(i, \xi)$ of box $i$ under an arbitrary search strategy $\xi$, and we refer to $\mathbf{T}(\xi) \equiv (T_1(\xi),T_2(\xi))$ as the {\em performance vector} of $\xi$, borrowing terminology from \cite{queyranne1994polyhedral}. We can calculate the performance vectors of $\xi^1$ and $\xi^2$ as follows.
\[
T_1(\xi^2) = 1 + 1 + \frac{1}{2} \cdot 1 + \frac{1}{4} \cdot 1 + \frac{1}{8}\cdot 1 + \frac{1}{16} \cdot T_1(\xi^2) \quad \Rightarrow \quad T_1(\xi^2) = \frac{46}{15}.
\]
Similarly, 
\[
T_2(\xi^2)= \frac{4}{3}, \quad T_1(\xi^1) = \frac{38}{15}, \text{ and } T_2(\xi^1) = \frac{7}{3}.
\]
Note that any best response $\xi$ to $\mathbf p^*$ has the same expected search time against $\mathbf p^*$, and this expected search time is given by
\[
u(\mathbf p^*, \xi) = p_1^* T_1(\xi) + p_2^* T_2(\xi).
\]
Hence, the performance vector of any best response $\xi$ must lie on the straight line segment between the vectors $\mathbf{T}(\xi^1)$ and $\mathbf{T}(\xi^2)$ in the $xy$-plane. Conversely, any point on this line segment is the performance vector of some best response $\xi$ to $\mathbf p^*$, where $\xi$ may be obtained by an appropriate mixing of $\xi^1$ and $\xi^2$.

In Figure~\ref{fig:BR1} we display the performance vectors of all best responses to $\mathbf p^*$ in the $xy$-plane, along with the line $x=y$. By Lemma~\ref{lemma}, part (iii), any optimal mixed Searcher strategy must have a performance vector that lies on the line $x=y$. Since the line $x=y$ does not intersect the set of best responses to $\mathbf p^*$, we may conclude that $\mathbf p^*$ is not optimal for the Hider.

\begin{figure}[h]
\center
\includegraphics[width=8cm]{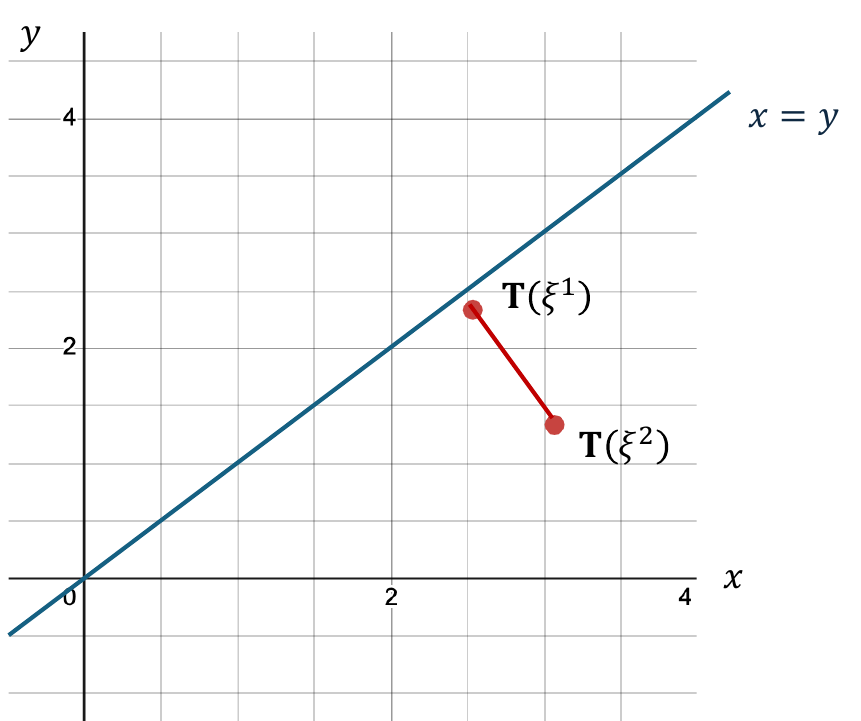} 
\caption{The set of performance vectors of strategies $\xi^1$ and $\xi^2$.}
\label{fig:BR1}
\end{figure}

To determine optimal strategies, it is helpful to understand the set of performance vectors of all Searcher strategies that are best responses to some Hider strategy, since any optimal Searcher strategy must necessarily be a best response to some Hider strategy. We refer to $\xi^1$ and $\xi^2$ as {\em extreme} search strategies. In general, an extreme search strategy is a best response to some Hider strategy that has the property that whenever there is an arbitrary choice between box 1 and box 2, it always chooses the same box. Clearly, for $j=1,2$, an extreme search strategy that always chooses box $j$ in the face of an arbitrary choice is the unique best response that minimizes the expected search time given that the target is in box $j$.

Consider the Hider strategy 
\[
\mathbf{p}' \equiv  \lambda \left( \frac{p_1^*}{1-q_1}, p_2^* \right),
\]
where $\lambda$ is a normalizing factor to ensure that $p'_1+p'_2=1$. The strategy $\mathbf p'$ is designed in such a way that a best response must first search box $1$, after which the distribution reverts to $\mathbf p^*$, after Bayesian updating. Therefore, the two extreme best responses to $\mathbf p'$ may be denoted $1,\xi^1$ and $1,\xi^2$: that is, $1$ followed by either $\xi^1$ or $\xi^2$. Note that $1,\xi^2$ is equal to $\xi^1$. Hence, the performance vectors of all best responses to $\mathbf p'$ form a line segment in the $xy$-plane from $\mathbf{T}(\xi^1)$ to $\mathbf{T}(1,\xi^1)$. 

It is easy to calculate the performance vector $\mathbf{T}(1,\xi^1)$ of $1,\xi^1$, using the following relations
\[
T_1(1,\xi^1) = p_1 + (1-p_1)(1+T_1(\xi^1)), \quad T_2(1, \xi^1) = 1+ T_2(\xi^1).
\]

Any Hider strategy $(p_1,p_2)$ with $p^*_1 < p_1 < p'_1$ has the unique best response $\xi^1$. In general, to obtain all possible best responses to some Hider strategy, it is easy to see that it is sufficient to consider only Hider strategies of the form
\[
\mathbf p = \lambda \left( p_1^*(1-q_1)^{r}, p_2^* \right),
\]
where $r$ is an integer and $\lambda$ is a normalizing factor. The performance vectors of the best responses to such strategies form a piecewise linear curve in the $xy$-plane, a portion of which is shown in Figure~\ref{fig:BR2}. We call this piecewise linear curve the {\em best response frontier}.

\begin{figure}[h]
\center
\includegraphics[width=9cm]{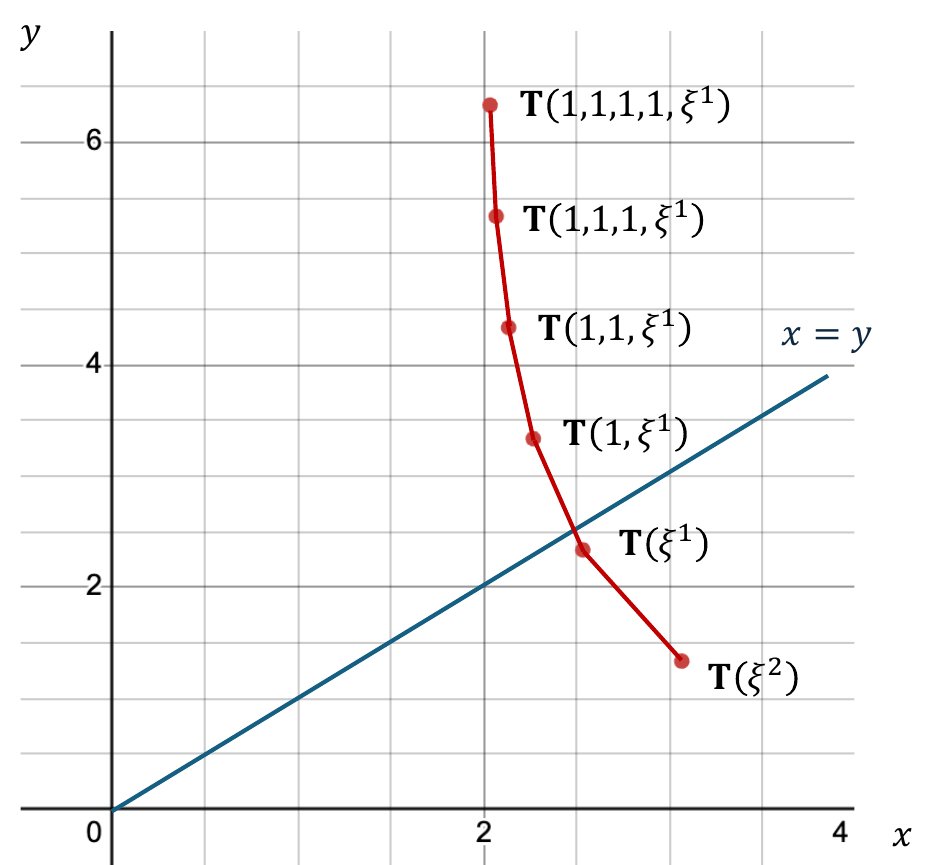} 
\caption{A subset of the best response frontier.}
\label{fig:BR2}
\end{figure}

It follows from Lemma~\ref{lemma}, part (iii), that we may determine an optimal strategy by finding the intersection of the $x=y$ with the best response frontier. In our example, it is clear that this intersection point is on the line segment between $\mathbf{T}(\xi^1)$ and $\mathbf{T}(1,\xi^1)$. Hence, the Hider strategy $\mathbf p'$ must be optimal, and it is optimal for the Searcher to mix between the strategies $\xi^1$ and $1,\xi^1$. We do not explicitly calculate the optimal mixture here, but it is easy to obtain by solving the following linear equation in $\beta$.
\[
\beta T_1(\xi^1) + (1-\beta) T_1(1,\xi^1) = \beta T_2(\xi^1) + (1-\beta) T_2(1,\xi^1).
\]
\subsection{Remarks}

In general, one approach to finding a solution of the game for $n=2$ would be to attempt to characterize the best response frontier, then find the point at which it intercepts the line $x=y$, as in the example of Subsection~\ref{sec:ex}. The example we chose is particularly simple, because we do not have to go too far on the best response frontier from the performance vectors of the best responses to $\mathbf p^*$ to find the point of intersection with the line $x=y$. It is not clear that finding the optimal strategies is so simple in general.

For $n \ge 3$, characterizing the best response frontier becomes highly non-trivial, and more work is needed if this approach is to be fruitful.

\section{Multiple Targets} \label{sec:multiple}

In this section we consider a generalization of the game, where we wish to find not one, but $k$ targets, for some integer $k=1,2,\ldots,n$. We consider only the case of perfect detection, where $q_1 = \cdots = q_n$, as solved in \cite{lidbetter2013search}.

A Hider's pure strategy set is now the set $\mathcal{B}^{(k)}$ of subsets of $\mathcal{B}$ of cardinality $k$, and a strategy for the Searcher is a {\em finite} sequence $\xi$, corresponding to a permutation of $\mathcal{B}$. The payoff function is the total time taken to find {\em all $k$} targets, and we continue to denote it by the function $u$.

Note that we could allow the Hider to hide multiple targets in the same box, but such a strategy would be weakly dominated by some strategy that hides the targets all in different boxes.

To define the Hider strategy $\nu$ that we will later see is optimal, we first introduce some notation. For a subset $\mathcal{A} \subseteq \mathcal{B}$ and $j=1,\ldots,n$, we define $S_j(\mathcal{A})$ as follows.
\[
S_j(\mathcal{A}) = \sum_{A \in \mathcal{A}^{(j)}} \pi(A),
\]
where $\pi(H) = \prod_{j \in H} t_j$ is the product of all the search times in $A$ and $\mathcal{A}^{(k)}$ is the collection of subsets of $\mathcal{A}$ of cardinality $k$. We write simply $S_j$ for $S_j(\mathcal{B})$. 

Now we define $\nu$ as follows. For a set $H \in \mathcal{B}^{(k)}$, let
\[
\nu(H) = \frac{\pi(H)}{S_k}.
\]
That is, $\nu(H)$ is chosen to be proportional to the product of all the search times of the boxes in $H$.

Let $\xi$ be an arbitrary Searcher strategy. By a relabeling argument, we may assume that $\xi=(1,2,\ldots,n)$. Clearly, at least $k$ boxes must be searched before all $k$ targets are found, and the probability that all $k$ targets have not been found before box $j \ge k+1$ is searched is
\[
1-\sum_{H \in [j-1]^{(k)}} \nu(H) = 1- \frac{S_k([j-1])}{S_k},
\]
where $[i]$ denotes $\{1,2,\ldots,i\}$.	Therefore, the expected search time is
\[
u(\nu,\xi) = t([k]) + \sum_{j=k+1}^n t_j \left( 1- \frac{S_k([j-1])}{S_k} \right) = t(\mathcal{B}) - \frac{ \sum_{j=k+1}^n t_j S_k([j-1])}{S_k}.
\]
It is easy to see that $\sum_{j=k+1}^n t_j S_k([j-1]) = S_{k+1}$, so 
\begin{align}
u(\nu,\xi) = t(\mathcal{B}) - \frac{ S_{k+1}}{S_k}. \label{eq:Vk}
\end{align}
Let $V_k = t(\mathcal{B}) - S_{k+1}/{S_k}$. Since $\xi$ was arbitrary, $V_k$ is a lower bound for the value of the game. We will show that $V_k$ is in fact the value of the game, so that the strategy $\nu$ is optimal for the Hider. 

For $H \in \mathcal{B}^{(k)}$, let $\xi_H$ be the Searcher strategy that starts by searching all the boxes in $H$, then searches the remaining boxes in a (uniformly) random order. (Note that the order of the first $k$ boxes does not matter, because at least $k$ boxes must be searched before all the targets are found). 

We consider the expected search time $u(H', \xi_H)$ of $\xi_H$ against an arbitrary Hider strategy $H' \in \mathcal{B}^{(k)}$. All the boxes in $H \cup H'$ must be searched, and each other box is searched with a fixed probability $\beta$, which only depends on the cardinality of $H \cup H'$. By a symmetrical argument, the same is true of the expected search time $u(H, \xi_{H'})$. Therefore, $u(H', \xi_H)$ and $u(H, \xi_{H'})$ must be equal, with
\begin{align}
u(H', \xi_H) = u(H, \xi_{H'}) = t(H \cup H') + \beta t(\mathcal{B} \setminus(H \cup H')). \label{eq:symm}
\end{align}
Since the precise value of $\beta$ is irrelevant for the argument, we do not calculate it explicitly. 

Now consider the Searcher mixed strategy $\theta$ that, for each $H \in \mathcal{B}^{(k)}$, chooses $\xi_H$   with probability $\nu(H)$. The expected search time of $\theta$ against an arbitrary $H' \in \mathcal{B}^{(k)}$ is
\[
u(H', \theta) = \sum_{H \in \mathcal{B}^{(k)}} \nu(H) u(H', \xi_H) =  \sum_{H \in \mathcal{B}^{(k)}} \nu(H) u(H, \xi_{H'}) =  u(\nu, \xi_{H'}) = V_k, 
\]
where the second inequality follows from (\ref{eq:symm}) and the final equality follows from (\ref{eq:Vk}).

We have shown that the strategies $\nu$ and $\theta$ are optimal and $V_k$ is the value of the game. Note that in this equilibrium, both players are indifferent between {\em all} their pure strategies. It follows that if we consider the related game where the players have the same strategy sets but the Searcher is the maximizer and the Hider is the minimizer, $\nu$ and $\theta$ are still optimal for the Hider and Searcher, respectively, and $V_k$ is still the value of the game.

One interesting property of the optimal Hider strategy is that whatever strategy the Searcher uses, at any point of the game, after $j$ targets have been found, the remaining $k-j$ targets will be hidden optimally. To rephrase this in a more precise way, we write the optimal Hider strategy as $\nu(\mathcal{B},k)$, to emphasize the dependency on $\mathcal{B}$ and $k$. Then, after $j$ targets have been found in some subset $\mathcal{A} \subseteq \mathcal{B}$, the remaining targets will be located in $\mathcal{B} \setminus \mathcal{A}$ according to $\nu(\mathcal{B} \setminus \mathcal{A}, k-j)$. This is easy to verify, using the definition of $\nu$ (see \cite{lidbetter2013search} for details).  It follows that even if the Searcher is permitted to adapt her search to exploit information gathered during the search, this does not advantage her.

If the Hider is also allowed to play adaptively by moving the targets during the search, it was also shown in \cite{lidbetter2013search} that the non-adaptive optimal strategies remain optimal. However, for the fourth and final case, where only the Hider is allowed to play adaptively, the value of the game is higher, in general.

It follows from standard  linear programming theory that there must be an optimal strategy for the Searcher whose support size is no more than ${n \choose k}$, since this is the size of the Hider's strategy set. The optimal strategy described above mixes between ${n \choose k} (n-k)!$ pure strategies. It is an interesting open problem to find an optimal Searcher strategy in closed form whose support size is $\mathcal{O}(n^k)$.

\section{Concluding Remarks}

As indicated in Section~\ref{sec:non-equal}, more work needs to be done to understand the game in the case of non-equal detection probabilities. Also, not much is known for the case of multiple targets. For instance, can we generalize the results of Section~\ref{sec:multiple} to the case of imperfect detection?  What about if the Searcher wishes to find not all $k$ of the targets, but only $j$ of them, for some fixed $j=1,2,\ldots,k-1$? 

There are  almost endless variations of these games, and we invite the reader to dip their toe into the intruiging world of box searching.

\subsection*{Acknowledgement}

This material is based upon work supported by the Air Force Office of Scientific Research under award number FA9550-23-1-0556.

\bibliographystyle{apalike}
\bibliography{ref}

\end{document}